\definecolor{gray75}{gray}{0.75}
\newcommand{\sln}{\linespread{1}}
\newcommand*{\email}[1]{\href{mailto:#1}{\nolinkurl{#1}} } 
\titleformat{\chapter}[block]{\LARGE\bfseries\sln}{Chapter \thechapter}{11pt}{\newline\huge\bfseries}
\newtheorem{theorem}{Theorem}[section]
\newtheorem{definition}{Definition}[section]
\newenvironment{proof}{\paragraph{Proof:}}{\hfill$\square$}
\newtheorem{lemma}{Lemma}[section]
\begin{document}
\title{ON C3-LIKE FINSLER METRICS UNDER RICCI FLOW}
\author{Ranadip Gangopadhyay and Bankteshwar Tiwari\\
DST-CIMS, Institute of Science, Banaras Hindu University,\\ Varanasi-221005, India}

\maketitle

\thanks{This paper is accepted for publication in Bulletin of the Transilvania University of Brasov  Series III -   Mathematics and Computer Science}
\begin{abstract}
 In this paper we have studied the class of Finsler metrics, called C3-like metrics which satisfy the un-normal and normal Ricci flow equation and proved that such metrics are Einstein.
\end{abstract}

\section{Introduction}
The Ricci flow theory, which was introduced by  Richard Hamilton, became a very powerful method in understanding the geometry and topology of Riemannian manifolds. Finally in the first decade of twenty first century Perelman able to classify the three dimensional smooth manifolds using Ricci flow equation \cite{GP1},\cite{GP2},\cite{GP3}. The Ricci flow equation is the evolution equation
\begin{center}
$\frac{d}{dt} g_{ij} = -2Ric_{ij} $,
\end{center}
for a Riemannian metric $g_{ij}$, where $Ric_{ij}$ is the Ricci curvature tensor. Hamilton showed that there is a unique solution to this equation for an arbitrary smooth metric on a closed manifold over a sufficiently short time \cite{H1},\cite{H2}.
\\The concept of Einstein metric is also very important topic in differential geometry as well as in physics. A Riemannian manifold $(M,g)$ is called Einstein if it's Ricci tensor $Ric$ can be given by $Ric = kg$, where $k$ is some constant. It is named after Albert Einstein because this condition is equivalent to saying that the metric is a solution of the vacuum Einstein field equations. A Finsler manifold $(M,F)$ is called Einstein if there exist a scalar function $K(x)$ on $M$ such that $Ric=(n-1)KF^2$.\\
 Finsler metrics satisfying Ricci flow equation has been an important topic of research. For instance Sadegzadeh and Razavi have studied C-reducible metrics satisfying Ricci flow equation \cite{NSAR}, where as Tayebi, Payghan and Najafi have studied semi C-reducible Finsler metrics satisfying Ricci flow equation \cite{TPN} and proved that such metrics are Einstein when they satisfies the Ricci flow equation.\\
 In the present paper we have extended those results to a more general class of Finsler metric, namely C3-like Finsler metric, introduced by Prasad and Singh \cite{PS}. More precisely we have prove the following theorems:
\begin{theorem}
Let $(M,F)$ be a Finsler manifold of dimension $n \ge 3$. Suppose that $F$ be a C3-like Finsler metric, then every deformation $F_t$ of the metric $F$ satisfying un-normal Ricci flow equation is an Einstein metric.
\end{theorem}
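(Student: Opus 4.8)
The plan is to convert the un-normal Ricci flow into a pointwise relation on the fundamental tensor and then exploit the rigidity of the C3-like decomposition of the Cartan tensor. Writing the flow as $\partial g_{ij}/\partial t=-2\,\mathrm{Ric}_{ij}$, where $\mathrm{Ric}_{ij}=\tfrac12\,\partial^2\mathrm{Ric}/\partial y^i\partial y^j$ is the (Akbar--Zadeh) Ricci tensor attached to the Ricci scalar $\mathrm{Ric}$, the Einstein condition $\mathrm{Ric}=(n-1)K(x)F^2$ is equivalent, after applying $\tfrac12\,\partial^2/\partial y^i\partial y^j$, to the proportionality $\mathrm{Ric}_{ij}=(n-1)K(x)\,g_{ij}$ with $K$ depending on position only. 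The whole problem therefore splits into two sub-goals: first, to show $\mathrm{Ric}_{ij}=\sigma\,g_{ij}$ for some scalar $\sigma=\sigma(x,y)$; second, to upgrade $\sigma$ to a function of $x$ alone.

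For the first sub-goal I would differentiate the defining identity of a C3-like metric — which expresses the Cartan tensor $C_{ijk}$ through the angular metric $h_{ij}=g_{ij}-\ell_i\ell_j$ (with $\ell_i=\partial F/\partial y^i$) and the mean Cartan torsion $I_i$, together with a cubic torsion term — with respect to the flow parameter $t$. Since $y$ and $t$ are independent, $\partial/\partial t$ commutes with $\partial/\partial y^k$, so from $C_{ijk}=\tfrac12\,\partial g_{ij}/\partial y^k$ and the flow one obtains the master identity $\partial C_{ijk}/\partial t=-\,\partial \mathrm{Ric}_{ij}/\partial y^k$. The right-hand side equals $-\tfrac12\,\partial^3\mathrm{Ric}/\partial y^i\partial y^j\partial y^k$ and is therefore totally symmetric in $i,j,k$; this is the structural fact I will play against the C3-like form. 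On the left-hand side I would substitute the $t$-derivatives of the building blocks, all computable from the flow via $\partial g_{ij}/\partial t=-2\mathrm{Ric}_{ij}$, $\partial F/\partial t=-\mathrm{Ric}/F$ (obtained by contracting the flow with $y^iy^j$ and using Euler's theorem on the degree-two function $\mathrm{Ric}$), and $\partial \ell_i/\partial t=\partial/\partial y^i(-\mathrm{Ric}/F)$.

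I would then feed these substitutions into the master identity and contract against $y^k$, against $g^{ij}$, and against $I^k$, using the orthogonality and trace relations $C_{ijk}y^k=0$, $h_{ij}y^j=0$, $\ell_iy^i=F$, $I_iy^i=0$ and Euler's theorem on the homogeneous quantities. The purpose of these contractions is to isolate, and then force the vanishing of, the part of $\mathrm{Ric}_{ij}$ transverse to $g_{ij}$: the totally symmetric right-hand side $-\partial_{y^k}\mathrm{Ric}_{ij}$ must be reproduced by the symmetrised $h$--$I$ structure coming from $\partial C_{ijk}/\partial t$, and the algebraic rigidity of that structure (together with $n\ge 3$) should leave $\mathrm{Ric}_{ij}$ no freedom except to be a scalar multiple of the metric, $\mathrm{Ric}_{ij}=\sigma\,g_{ij}$. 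Once this proportionality is secured, the second sub-goal is clean: differentiating $\mathrm{Ric}_{ij}=\sigma g_{ij}$ in $y^k$ gives $\partial_{y^k}\mathrm{Ric}_{ij}=\sigma_{y^k}g_{ij}+2\sigma\,C_{ijk}$, and since the left-hand side and $C_{ijk}$ are totally symmetric, so must be $\sigma_{y^k}g_{ij}$; tracing $\sigma_{y^k}g_{ij}=\sigma_{y^j}g_{ik}$ with $g^{ij}$ yields $(n-1)\sigma_{y^k}=0$, whence $\sigma=\sigma(x)$ and $\mathrm{Ric}=\sigma(x)F^2$ exhibits $F$ as Einstein with $K=\sigma/(n-1)$.

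The main obstacle is the third step: faithfully computing the flow-derivatives of the non-Riemannian data $\ell_i$, $h_{ij}$, $I_i$ and the scalar coefficients in the C3-like expansion, and then verifying that after contraction every term carrying the torsion $I_i$ cancels, leaving only the metric term. This is precisely where the C3-like hypothesis is indispensable and where the dimension restriction enters, since the coefficients of the decomposition are normalised by traces such as $I^iI_i$ and $n+1$. I would manage it by organising the contraction so that the totally symmetric right-hand side of the master identity is matched term by term against the symmetrised $h$--$I$ structure of $\partial C_{ijk}/\partial t$, and by using the $y$-homogeneity degrees to check that the $I_i$-terms are annihilated on contraction rather than merely recombined.
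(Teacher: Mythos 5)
Your overall skeleton --- flow $\Rightarrow$ constraint on the vertical derivatives of the Ricci scalar $\Rightarrow$ Einstein --- has the right shape, and two of your ingredients are sound: the master identity $C'_{ijk}=-Ric_{ij,k}$ (the paper uses exactly this, in contracted form, in its Lemma 3.1), and your final step upgrading $Ric_{ij}=\sigma g_{ij}$ to $\sigma=\sigma(x)$ via total symmetry and a trace. But the heart of the theorem is your first sub-goal, and there you have not given a proof: you assert that after contracting against $y^k$, $g^{ij}$ and $I^k$, the C3-like structure ``should leave $Ric_{ij}$ no freedom except to be a scalar multiple of the metric.'' That sentence \emph{is} the theorem; nothing in your proposal supplies the mechanism, and you concede as much in your last paragraph. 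Worse, the particular contractions you list cannot do the job: contracting the master identity with $y^k$ gives $0=0$ identically (since $C_{ijk}y^k=0$ holds for every $t$ and $Ric_{ij}$ is $0$-homogeneous in $y$, both sides vanish by Euler's theorem), and contracting with $g^{ij}$ merely reproduces the evolution law of the mean Cartan torsion, $I_i'=-\rho_i$ with $\rho:=g^{jk}Ric_{jk}$ --- an ingredient of the proof, not its conclusion.

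What actually closes the argument in the paper is a comparison of two computations of the \emph{triple} contraction $C'_{ijk}I^iI^jI^k$ (all three slots filled with $I$, not one). Computed from $-Ric_{ij,k}$, it equals terms divisible by $\|I\|^2$ plus the term $\tfrac12 F^2R_{,i,j,k}I^iI^jI^k$. Computed instead by $t$-differentiating the C3-like ansatz --- using $I_i'=-\rho_i$, the formula for $h_{ij}'$, and the trace identity $a_i=\frac{1}{n+1}\left\lbrace (1-2I^mb_m)I_i-\|I\|^2b_i\right\rbrace$, which is precisely where the C3-like hypothesis enters --- it is manifestly divisible by $\|I\|^2$. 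Equating the two forces $R_{,i,j,k}I^iI^jI^k$ to be divisible by $\|I\|^2$, from which the paper writes $R_{,i,j,k}=A_{ij}I_k+B_ig_{jk}$ and then, by contracting with $y^j$ or $y^k$ and using homogeneity, concludes $R_{,i}=0$, i.e. $R=R(x)$ and $F_t$ is Einstein. Some version of this divisibility comparison (or an equivalent rigidity computation) is indispensable; without it your proposal remains a plan rather than a proof.
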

\begin{theorem}
Let $(M,F)$ be a Finsler manifold of dimension $n \ge 3$. Suppose that $F$ be a C3-like Finsler metric, then every deformation $F_t$ of the metric $F$ satisfying normal Ricci flow equation is an Einstein metric.
\end{theorem}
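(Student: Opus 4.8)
The plan is to pass from the tensorial normalized Ricci flow to a scalar evolution equation for $F$, and then to exploit the C3-like form of the Cartan tensor to force the Ricci tensor to be a direction-independent multiple of the fundamental tensor. First I would reformulate the flow. Writing subscripts $y^{i}$ for differentiation in the fibre coordinates and recalling the standard relation $Ric_{ij}=\tfrac12\,[Ric]_{y^{i}y^{j}}$ (so that $Ric_{ij}y^{i}y^{j}=Ric$), I would contract the normalized equation
\[
\frac{\partial}{\partial t}g_{ij}=-2\,Ric_{ij}+\frac{2}{n}\,r\,g_{ij}
\]
with $y^{i}y^{j}$ and use $g_{ij}y^{i}y^{j}=F^{2}$ to obtain
\[
\frac{\partial}{\partial t}(\log F)=-\frac{Ric}{F^{2}}+\frac{r}{n},
\]
where the normalization scalar $r=r(t)$ is independent of $y$. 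Since $Ric=(n-1)KF^{2}$ is equivalent to $Ric_{ij}=(n-1)K\,g_{ij}$, the entire theorem reduces to two assertions: (i) that $Ric_{ij}=\lambda\,g_{ij}$ for some scalar $\lambda$, and (ii) that $\lambda$ does \emph{not} depend on $y$. The displayed characterization then identifies $K=\lambda/(n-1)$ as a function of $x$ (and $t$) alone, i.e.\ $F_{t}$ is Einstein.

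Next I would differentiate the flow once in $y^{k}$. Using $\partial_{y^{k}}g_{ij}=2C_{ijk}$ and the fact that $y$ is $t$-independent, the normalized equation yields the evolution of the Cartan tensor
\[
\frac{\partial}{\partial t}C_{ijk}=-\tfrac12\,[Ric]_{y^{i}y^{j}y^{k}}+\frac{2r}{n}\,C_{ijk}.
\]
Into this identity I would substitute the defining C3-like expression for $C_{ijk}$ of Prasad--Singh \cite{PS} (which specializes to the C-reducible form of \cite{NSAR} and the semi-C-reducible form of \cite{TPN}), together with its $t$-derivative, rewriting the left-hand side as a combination of the angular metric $h_{ij}=g_{ij}-F_{y^{i}}F_{y^{j}}$, the mean Cartan torsion $I_{i}=g^{jk}C_{ijk}$, and their evolutions. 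I would then contract the resulting tensor identity repeatedly (against $g^{ij}$, $I^{i}$ and $y^{i}$), using the homogeneity relations $C_{ijk}y^{i}=0$, $I_{i}y^{i}=0$, $h_{ij}y^{j}=0$ and Euler's theorem for $Ric$. The purpose of these contractions is to produce a closed system of scalar equations in which the trace-free part of $Ric_{ij}$ is forced to vanish, establishing assertion (i), namely $Ric_{ij}=\lambda\,g_{ij}$.

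Once $Ric_{ij}=\lambda g_{ij}$ is known, assertion (ii) follows cleanly and is exactly where the hypothesis $n\ge 3$ is used. Differentiating in $y^{k}$ gives
\[
\tfrac12\,[Ric]_{y^{i}y^{j}y^{k}}=\partial_{y^{k}}Ric_{ij}=(\partial_{y^{k}}\lambda)\,g_{ij}+2\lambda\,C_{ijk},
\]
whose left-hand side is totally symmetric in $i,j,k$. Since $C_{ijk}$ is totally symmetric as well, the term $(\partial_{y^{k}}\lambda)\,g_{ij}$ must be symmetric under $i\leftrightarrow k$; writing $a_{k}:=\partial_{y^{k}}\lambda$ and contracting $a_{k}g_{ij}=a_{i}g_{kj}$ with $g^{ij}$ gives $n\,a_{k}=a_{k}$, hence $a_{k}=0$. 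Thus $\lambda=\lambda(x,t)$ and $F_{t}$ is Einstein.

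The main obstacle is the middle step: carrying out the contractions of the cubic C3-like expression against the third $y$-derivative of $Ric$ and showing that the trace-free part of $Ric_{ij}$ indeed drops out. This is where the precise algebraic shape of the C3-like condition is essential and where one must carefully separate the genuinely $y$-dependent pieces from the scalar ones. For the normalized flow specifically, I would note that the extra normalization term enters every contraction only through the $y$-independent factor $r/n$ multiplying $g_{ij}$ (or $C_{ijk}$ after differentiation); hence it affects only the scalar $\lambda$ and leaves both the vanishing of the trace-free part and the symmetry argument above exactly as in the un-normalized case treated in the preceding theorem. Combining the three steps yields $Ric=(n-1)K(x)F^{2}$, completing the proof.
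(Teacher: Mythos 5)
Your overall skeleton matches the paper's strategy in spirit: pass to the scalar form of the flow, observe that the normalization term is $y$-independent and therefore harmless, and reduce the theorem to showing that $R = Ric/F^2$ does not depend on $y$. Your symmetry argument in step (ii) is correct (though note it already works for $n\ge 2$, so it is not really "where the hypothesis $n\ge 3$ is used"). However, there is a genuine gap at exactly the point you yourself flag as "the main obstacle": you never establish assertion (i). Saying that one should "contract the resulting tensor identity repeatedly (against $g^{ij}$, $I^{i}$ and $y^{i}$)" and that these contractions will "force" the trace-free part of $Ric_{ij}$ to vanish is a hope, not an argument --- that vanishing \emph{is} the content of the theorem, and nothing in your outline identifies which contraction does the job or why the C3-like structure makes it succeed. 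Worse, contractions with $y^{i}$ are not well-targeted, since they annihilate most of the relevant tensors ($C_{ijk}y^{k}=0$, $h_{ij}y^{j}=0$, $I_{i}y^{i}=0$), so the plan as stated gives no traction.

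What the paper actually does at this point is compute the single cubic scalar $C'_{ijk}I^iI^jI^k$ in two independent ways. First, since $C'_{ijk}=\tfrac12\partial_{y^k}g'_{ij}$ and $g'_{ij}$ is given by the flow, one obtains (Lemma 3.1 and its normalized analogue)
\[
C'_{ijk}I^iI^jI^k \;=\; \tfrac{1}{2}F^2R_{,i,j,k}I^iI^jI^k \;+\; \bigl(\textnormal{terms divisible by } \|I\|^2\bigr).
\]
Second, differentiating the C3-like ansatz (\ref{eq0}) in $t$ and using the evolution formulas $I'_i=-\rho_i$ from (\ref{eq17}), $h'_{ij}$ from (\ref{eq18}), and the expression (\ref{eq10}) for $a_i$ obtained by tracing the ansatz with $g^{ij}$, one finds in (\ref{eq20}) that $C'_{ijk}I^iI^jI^k$ is \emph{entirely} divisible by $\|I\|^2$; the extra normalization term contributes only $6a_k\int_{SM}RdV\,I^k\|I\|^2$, again divisible by $\|I\|^2$. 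Comparing the two computations forces $F^2R_{,i,j,k}I^iI^jI^k$ to be divisible by $\|I\|^2$, from which the paper writes $R_{,i,j,k}=A_{ij}I_k+B_ig_{jk}$ as in (\ref{eq13}) and then, contracting with $y^k$ or $y^j$ and using the homogeneity of $R$ together with $I_ky^k=0$, concludes $R_{,i}=0$, i.e.\ the metric is Einstein. This mechanism --- evaluating the same cubic contraction once via the flow and once via the C3-like structure, and exploiting divisibility by $\|I\|^2$ --- is the engine of the proof, and it is absent from your proposal; without it, or a worked-out substitute, the proposal does not prove the theorem.
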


\section{Preliminaries}

Let $M$ be an $n$-dimensional smooth manifold and $T_{x}M$ denotes the tangent space of $M$
 at $x$. The tangent bundle of $ M $ is the union of tangent spaces $ TM:= \bigcup _{x \in M}T_xM $. We denote the elements of $TM$ by $(x,y)$ where $y\in T_{x}M $. Let $TM_0:=TM \setminus\left\lbrace 0\right\rbrace $.
 \begin{definition}\cite{SSZ}
 A Finsler metric on $M$ is a function $F:TM\rightarrow[0,\infty)$ with the following properties:
 \\(i) $F$ is $C^\infty$ on $TM_{0}$,
 \\(ii) $F$ is a positively 1-homogeneous on the fibers of tangent bundle $TM$.
 \\(iii) The Hessian of $\frac{F^2}{2}$ with element $g_{ij}=\frac{1}{2}\frac{\partial ^2F^2}{\partial y^i \partial y^j}$ is positive definite on $TM_0$.
 \\The pair $(M,F)$ is called a Finsler space, $F$ is called the fundamental function and $g_{ij}$ is called the fundamental tensor.
 \end{definition}
 Let $\left( M,F \right) $ be a Finsler space. For a vector $y \in T_xM \setminus \left\lbrace  0\right\rbrace $, let 
 \begin{center}
 $C_y(u,v,w) := \frac{1}{4} \frac{\partial ^3}{\partial s \partial t \partial r}[F^2(y+su+tv+rw)]_{s=t=r=0}$
 \end{center}
where $u,v,w \in T_xM$. Each $C_y$ is a symmetric trilinear form on $V$. We call the family $C:= \left\lbrace C_y : y \in T_xM \setminus \left\lbrace  0\right\rbrace\right\rbrace  $ the Cartan torsion. Let $\left\lbrace b_i \right\rbrace $ be a basis for $T_xM$ and define $g_{ij} := g_y(b_i,b_j)$, $C_{ijk} := C_y(b_i,b_j,b_k)$. 
 \\ Then $C_{ijk} = \frac{1}{4}[F^2]_{y^iy^jy^k} = \frac{1}{2}\frac{\partial}{\partial y^k}(g_{ij})$.
 \\Define the mean value of the Cartan torsion by
 \begin{center}
 $I_y(u) := \Sigma _{i=1}^{n}g^{ij}(y)C_y(u,b_i,b_j)$ , $u \in T_xM$.
 \end{center}
We call the family $I := \left\lbrace I_y | y \in T_xM \setminus \left\lbrace  0\right\rbrace \right\rbrace $ the mean Cartan torsion at $x \in M$  
  \begin{center}
 i.e. $I_i = g^{jk}C_{ijk}$.
  \end{center}
  \begin{definition}\cite{PS}
 A Finsler metric $F$ is called C3-like if its Cartan torsion is given by 
  \begin{equation}\label{eq0}
  C_{ijk}=\left\lbrace a_ih_{jk}+ a_jh_{ki}+a_kh_{ij} \right\rbrace +\left\lbrace b_iI_jI_k+b_jI_iI_k+b_k I_iI_j \right\rbrace,
  \end{equation}
  where $a_i=a_i(x,y)$ and $b_i=b_i(x,y)$ are covectors on $TM$ and of degree $-1$ and $1$, respectively and $h_{ij}$ is angular metric tensor given by $h_{ij}=F\frac{\partial^2 F}{\partial y^i \partial y^j}$.
    \end{definition}
     In particular :
  \\ (i)   if $a_i = 0$, then we have $C_{ijk} = {b_iI_jI_k +b_jI_i I_k + b_kI_iI_j }$, contracting it with $g^{ij}$ implies that $b_i = \frac{1}{3I^2}I^i$, then $F$ is a C2-like Finsler metric,
  \\ (ii)  if $b_i = 0$, then
  we have $C_{ijk} = a_ih_{jk} + a_jh_{ki} + a_kh_{ij}$, contracting it with $g^{ij}$ implies that $a_i = \frac{1}{n+1}I_i$, then $F$ is a C-reducible Finsler metric.
  \\ (iii) if $a_i =\frac{p}{n + 1}I_i$ and $b_i =\frac{q}{3I^2}I_i$, where $p = p(x, y)$ and $q = q(x, y)$ are scalar functions
  on $TM$, then $F$ is a semi C-reducible Finsler metric.
  \\ Therefore, a C3-like Finsler metric may be consider as a generalization of C-reducible, semi C-reducible and C2-like Finsler metrics.
\section{Un-normal Ricci flow equation on C3-like Finsler metrics}

The geometric evolution equation 
\begin{equation}\label{eq1}
\frac{d}{dt} g_{ij} = -2Ric_{ij},
\end{equation}
is known as the un-normalized Ricci flow in Riemannian geometry. In principle, the same equation can be used in Finsler setting, because both $g_{ij}$ and $Ric_{ij}$ have been generalised to the broader framework, albeit gaining a $y$ dependence in the process. However, there are two reasons why we shall refrain from doing so.
\\ (i) Not every symmetric covariant 2-tensor $g_{ij}(x,y)$ arises from a Finsler metric $F(x,y)$.
\\(ii) There is more than one geometrical context in which $g_{ij}$ makes sense.
\par A deformation of Finsler metrics means a $1$-parameter family of metrics $g_{ij}(x,y,t)$, such that $t \in [-\epsilon, \epsilon]$ and $\epsilon > 0$ is sufficiently small. For such a metric $\omega = u_idx^i$, the volume element as well as the connections attached to it depend on $t$. The same equation can be used in the Finsler setting. But instead of the above tensor evolution equation, we will use a different form of the above equation. By contracting $\frac{d}{dt} g_{ij} = -2Ric_{ij} $ with $y^i$ and $y^j$ respectively and using Eulers theorem, we get 
\begin{center}
$\frac{\partial F^2}{\partial t} = -2F^2R $,
\end{center}
where $R= \frac{Ric}{F^2}$. That is,
\begin{equation*}
d \log F =-R , F(t=0)=F_0.
\end{equation*}
This scalar equation directly addresses the evolution of the Finsler metric $F$,
and makes geometrical sense on both the manifold of nonzero tangent vectors
$TM_0$ and the manifold of rays. It is therefore suitable as an un-normalized Ricci
flow for Finsler geometry.
In this section we will study C3-like Finsler metrics satisfying un-normal Ricci flow equation. 
\\ Let us assume that $F_t$ be a deformation of C3-like Finsler metrics which satisfies the Un-Normalised Ricci flow equation given by 
\begin{equation*}
g_{ij}^{\prime}= -2Ric_{ij} \quad \textnormal{or} \quad \frac{F^{\prime}}{F}=-R, \quad \textnormal{where} \quad g_{ij}'=\frac{\partial g_{ij}}{\partial t}
\end{equation*}
 Before proving Theorem 1.1 we need the following lemmas:
 \begin{lemma}
If $F_t$ be a deformation of a C3-like Finsler metrics $F$ on manifold $M$ of
dimension $n \ge 3$, then the variation of Cartan torsion is given by 
\begin{equation}\label{eq2}
C_{ijk}^{\prime}I^iI^jI^k =3(a_i+3b_i)I^i \|I\|^2+\frac{1}{2}F^2R_{,i,j,k}I^iI^jI^k+3\ \|I\|^2I_mR_{,m},
\end{equation}
where $R_{,m}= \frac{\partial R}{\partial y^m}.$
 \end{lemma}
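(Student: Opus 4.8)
The plan is to use that the Cartan torsion is a third vertical derivative of the energy, $C_{ijk}=\tfrac14(F^2)_{y^iy^jy^k}$, and to trade the $t$-derivative for a vertical one by means of the flow. Since $t$ is independent of the fibre coordinates $y^i$, the operators $\partial_t$ and $\partial_{y^i}$ commute, so that $C_{ijk}^{\prime}=\tfrac14\big[(F^2)^{\prime}\big]_{y^iy^jy^k}$. The scalar form $F^{\prime}/F=-R$ of the un-normal Ricci flow gives $(F^2)^{\prime}=2FF^{\prime}=-2RF^2$, whence $C_{ijk}^{\prime}=-\tfrac12\,(RF^2)_{y^iy^jy^k}$. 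This reduces the whole computation to differentiating the product $RF^2$ three times in $y$ and contracting the result with $I^iI^jI^k$.

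I would then Leibniz-expand $(RF^2)_{y^iy^jy^k}$ into the eight terms grouped by how many vertical derivatives fall on $R$ and how many on $F^2$: one term $R_{,i,j,k}F^2$, three of type $R_{,i,j}(F^2)_{y^k}$, three of type $R_{,i}(F^2)_{y^jy^k}$, and one term $R\,(F^2)_{y^iy^jy^k}$, and contract each with $I^iI^jI^k$. The homogeneity identities then do the real work. Because the Cartan torsion is tangential, $C_{ijk}y^k=0$, the mean torsion is orthogonal to the supporting element, $I_iy^i=0$; combined with $(F^2)_{y^k}=2y_k$ this makes every term carrying a single factor $(F^2)_{y^k}$ contract to $2y_kI^k=0$, so all three mixed terms drop out. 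For the three terms with two derivatives on $F^2$ I would use $(F^2)_{y^jy^k}=2g_{jk}$, so each collapses to $2\|I\|^2\,I^mR_{,m}$ and together they give $6\|I\|^2\,I^mR_{,m}$; the pure term gives $F^2R_{,i,j,k}I^iI^jI^k$; and the remaining term reproduces a Cartan-torsion contraction through $(F^2)_{y^iy^jy^k}=4C_{ijk}$.

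It then remains to evaluate $C_{ijk}I^iI^jI^k$ from the C3-like representation (\ref{eq0}). Using $h_{jk}I^jI^k=\|I\|^2$ — which again follows from $I_iy^i=0$, since $h_{jk}=g_{jk}-F^{-2}y_jy_k$ — together with $I_jI^j=\|I\|^2$, the angular block $a_ih_{jk}+a_jh_{ki}+a_kh_{ij}$ contracts to $3(a_iI^i)\|I\|^2$ and the mean-torsion block $b_iI_jI_k+b_jI_iI_k+b_kI_iI_j$ to $3(b_iI^i)\|I\|^4$; these are the pieces that produce the $3(a_i+3b_i)I^i\|I\|^2$ contribution of (\ref{eq2}). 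Collecting the three surviving groups then yields the asserted identity. I expect the only real obstacle to be the bookkeeping: deciding which of the eight Leibniz terms survive the contraction, applying the homogeneity relations $(F^2)_{y^i}=2y_i$, $(F^2)_{y^iy^j}=2g_{ij}$ and $(F^2)_{y^iy^jy^k}=4C_{ijk}$ consistently, and carrying out the symmetrised C3-like contraction without dropping a factor; no single step is hard, but the index manipulation must be handled with care.
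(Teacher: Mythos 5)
Your reduction is, in substance, the same as the paper's. Since the paper defines $Ric_{ij}=\tfrac12[RF^2]_{y^iy^j}$, your starting identity $C_{ijk}^{\prime}=\tfrac14\bigl[(F^2)^{\prime}\bigr]_{y^iy^jy^k}=-\tfrac12(RF^2)_{y^iy^jy^k}$ is exactly the paper's implicit relation $C_{ijk}^{\prime}=\tfrac12\partial_{y^k}g_{ij}^{\prime}=-Ric_{ij,k}$; your Leibniz expansion is the paper's (\ref{eq4}), your contraction using $y_iI^i=0$ is its (\ref{eq5}), and your evaluation of the C3-like block is its (\ref{eq25}). The paper merely packages the first two vertical derivatives of $RF^2$ as $Ric_{ij}$ before taking the third; the three surviving groups of terms are identical in both computations.

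The problem is your closing claim that ``collecting the three surviving groups then yields the asserted identity.'' It does not. Carried through with your own (correct) intermediate identities, the collection gives
\begin{equation*}
C_{ijk}^{\prime}I^iI^jI^k=-\tfrac12\Bigl[F^2R_{,i,j,k}I^iI^jI^k+6\,\|I\|^2I^mR_{,m}+4R\,C_{ijk}I^iI^jI^k\Bigr],
\end{equation*}
that is,
\begin{equation*}
C_{ijk}^{\prime}I^iI^jI^k=-2R\,C_{ijk}I^iI^jI^k-\tfrac12F^2R_{,i,j,k}I^iI^jI^k-3\,\|I\|^2I^mR_{,m},
\end{equation*}
which differs from (\ref{eq2}) by an overall sign and by the fact that the Cartan contraction enters with weight $-2R$ rather than $+1$. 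In addition, your own contraction of the mean-torsion block correctly gives $3(b_iI^i)\|I\|^4$, which equals the $9(b_iI^i)\|I\|^2$ needed for the term $3(a_i+3b_i)I^i\|I\|^2$ only when $\|I\|^2=3$; so even the first term of (\ref{eq2}) does not follow from what you computed. To be fair, the paper's own proof makes precisely the same two leaps in passing from its (\ref{eq5}) and (\ref{eq25}) to (\ref{eq6}) --- it silently flips the sign of $Ric_{ij,k}I^iI^jI^k$ and drops the factor $2R$ --- so your proposal faithfully reproduces the published argument, defect included. But as a self-contained derivation of the lemma as stated, the final step fails: what this method actually proves is the displayed formula above, not (\ref{eq2}).
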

\begin{proof}
 Let us assume that $F_t$ be a deformation of a Finsler metric F on a 2-dimensional manifold $M$ satisfies Ricci flow equation given by (\ref{eq1}).\\
By definition of Ricci tensor, we have
\begin{eqnarray}\label{eq3}
Ric_{ij}&=& \frac{1}{2}[RF^2]_{y^iy^j} \nonumber\\
&=& Rg_{ij}+\frac{1}{2}F^2R_{,i,j}+R_{,i}y_{j}+R_{,j}y_i
\end{eqnarray}
where $R_{,i}=\frac{\partial R}{\partial y^i} $ and $R_{,i,j}=\frac{\partial ^2 R}{\partial y^i \partial y^j}$.\\ Taking vertical derivative of (\ref{eq3}) and from the fact $y_{i,j}=g_{ij}$ and $FF_k = y_k$ we have \\
\begin{eqnarray}\label{eq4}
Ric_{ij,k}= 2RC_{ijk}+\frac{1}{2}F^2R_{,i,j,k}+(g_{jk}R_{,i}+g_{ij}R_{,k}+g_{ki}R_{,j}) \nonumber \\ +\left\lbrace R_{,j,k}y_i+R_{,i,j}y_k+R_{,k,i}y_j\right\rbrace .
\end{eqnarray}
Multiplying (\ref{eq4}) with $I^iI^jI^k$ and using $y_iI^i=y^iI_i=0$ we obtain
\begin{equation}\label{eq5}
Ric_{ij,k}I^iI^jI^k= 2RC_{ijk}I^iI^jI^k+\frac{1}{2}F^2R_{,i,j,k}I^iI^jI^k+3\ \|I\|^2I_mR_{,m}
\end{equation}
Now multiplying (\ref{eq0}) by $I^iI^jI^k$ we have
\begin{equation}\label{eq25}
C_{ijk}I^iI^jI^k =3(a_i+3b_i)I^i \|I\|^2.
\end{equation}
Since $F_t$ satisfies the Ricci flow equation so using  (\ref{eq25}) we have
\begin{equation}\label{eq6}
C_{ijk}' I^iI^jI^k=3(a_i+3b_i)I^i \|I\|^2+\frac{1}{2}F^2R_{,i,j,k}I^iI^jI^k+3\ \|I\|^2I_mR_{,m}.
\end{equation}
\end{proof}
\begin{lemma}
If $F_t$ be a deformation of C3-like Finsler metrics $F$ on a manifold $M$ of
dimension $n \ge 3$, then $C_{ijk}'I^iI^jI^k$ is divisible by $\|I\|^2$. 
\end{lemma}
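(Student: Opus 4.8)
The plan is to obtain the conclusion straight from the identity (\ref{eq2}) of the preceding lemma by isolating the one summand that is not obviously proportional to $\|I\|^2$. On the right-hand side of (\ref{eq2}) the first summand $3(a_i+3b_i)I^i\|I\|^2$ and the last summand $3\|I\|^2 I_mR_{,m}$ both exhibit $\|I\|^2$ as an explicit factor, so the whole assertion collapses to a single claim: that the middle summand $\frac{1}{2}F^2R_{,i,j,k}I^iI^jI^k$ is divisible by $\|I\|^2$. The entire proof is therefore a proof about this one term.

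To handle it I would compute $C_{ijk}'I^iI^jI^k$ a second time, now by differentiating the defining relation (\ref{eq0}) in $t$ and then contracting with $I^iI^jI^k$. The two elementary contraction identities $h_{jk}I^jI^k=\|I\|^2$ and $I_jI^j=\|I\|^2$ --- the first of which follows from $h_{jk}=g_{jk}-\frac{y_jy_k}{F^2}$ together with $y_jI^j=0$ --- make every term in which the $t$-derivative lands on a covector $a_i,\,b_i$ or on a Cartan vector $I_j$ visibly a multiple of $\|I\|^2$ (the contributions coming from the $b$-part in fact carry $\|I\|^4$). The same orthogonality $y_jI^j=0$ also kills the derivative of the $\frac{y_jy_k}{F^2}$ piece of $h_{jk}$, so that whenever the $t$-derivative hits the angular factor one is left with $h_{jk}'I^jI^k=g_{jk}'I^jI^k=-2\,Ric_{jk}I^jI^k$.

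This last reduction is exactly where the difficulty concentrates, and it is the step I expect to be the main obstacle: by (\ref{eq3}) the contraction $Ric_{jk}I^jI^k$ feeds derivatives of $R$ back into the expression, so one cannot conclude divisibility by inspection alone. My proposal is to tame these feedback terms using the $0$-homogeneity of $R$ --- which gives the Euler relations $R_{,i}y^i=0$, and on successive differentiation $R_{,i,j}y^i=-R_{,j}$ and $R_{,i,j,k}y^i=-2R_{,j,k}$ --- together with $y_iI^i=0$ and, if needed, the structural identity obtained by tracing (\ref{eq0}) with $g^{jk}$ (which expresses $I_i$ through $a_i$, $b_i$ and $\|I\|^2$). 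Matching this second expression for $C_{ijk}'I^iI^jI^k$ against (\ref{eq2}) and cancelling all the summands already known to be proportional to $\|I\|^2$ should then force $\frac{1}{2}F^2R_{,i,j,k}I^iI^jI^k$ to equal $\|I\|^2$ times a smooth quantity, giving the stated divisibility. I anticipate that the bookkeeping of the $R$-derivative feedback terms, rather than any single conceptual step, will be the part requiring the most care.
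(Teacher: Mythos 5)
Your skeleton is essentially the paper's: differentiate the C3-like identity (\ref{eq0}) in $t$, contract with $I^iI^jI^k$, use $I_i'=-\rho_i$ and $y_iI^i=0$ so that every term in which the derivative hits $a_i$, $b_i$ or $I_j$ visibly carries $\|I\|^2$, and reduce the angular part to $h'_{jk}I^jI^k=g'_{jk}I^jI^k=-2Ric_{jk}I^jI^k$ (this is the paper's (\ref{eq21})--(\ref{eq22})). But at the step you yourself flag as the main obstacle, the tool you propose cannot do the job. The Euler relations $R_{,i}y^i=0$, $R_{,i,j}y^i=-R_{,j}$, $R_{,i,j,k}y^i=-2R_{,j,k}$ only control contractions with $y$, and in the problematic term
\[
\bigl(a_ih'_{jk}+a_jh'_{ki}+a_kh'_{ij}\bigr)I^iI^jI^k=3\,(a_iI^i)\,h'_{jk}I^jI^k=-6\,(a_iI^i)\,\bigl(Ric_{jk}I^jI^k\bigr)
\]
every $y$-contraction has already been killed by $y_iI^i=0$; what remains, $Ric_{jk}I^jI^k=R\|I\|^2+\tfrac{1}{2}F^2R_{,j,k}I^jI^k$ by (\ref{eq3}), is \emph{not} divisible by $\|I\|^2$ in general, and no amount of homogeneity bookkeeping will make it so. Chasing the "feedback terms" this way is a dead end.

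What actually rescues the term --- and is the paper's decisive step, namely substituting (\ref{eq10}) into (\ref{eq22}) --- is precisely the identity you relegate to an "if needed" aside: tracing (\ref{eq0}) with $g^{jk}$ gives $a_i=\frac{1}{n+1}\bigl\{(1-2I^mb_m)I_i-\|I\|^2b_i\bigr\}$, hence $a_iI^i=\frac{1}{n+1}(1-3b_mI^m)\|I\|^2$. The \emph{coefficient} $a_iI^i$ itself carries the factor $\|I\|^2$, so $-6(a_iI^i)(Ric_{jk}I^jI^k)$ is divisible by $\|I\|^2$ no matter what $Ric_{jk}I^jI^k$ is; you never need to open up the Ricci contraction at all. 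Promote that trace identity from afterthought to centerpiece and your argument closes. A further simplification: your detour through (\ref{eq2}) is superfluous. Once the second computation exhibits every summand of $C'_{ijk}I^iI^jI^k$ as an explicit multiple of $\|I\|^2$, the lemma is proved outright; the divisibility of $F^2R_{,i,j,k}I^iI^jI^k$ by $\|I\|^2$ is a \emph{corollary} of comparing this with (\ref{eq2}) (which is exactly how the paper then proves Theorem 1.1), not an input to the present lemma.
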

\begin{proof}
Since $g^{ij}g_{jk} = \delta_k ^i$, then differentiating with respect to $t$ and using $g'_{ij}= -2Ric_{ij}$ we have $g'^{il}=2Ric^{il}$.
\\At first we will calculate the value of $I_i'$
\begin{eqnarray}\label{eq7}
I_i' \nonumber
&=& (g^{jk}C_{ijk})'
\\&=& (g^{jk})'C_{ijk} +g^{jk}C_{ijk}' \nonumber
\\ &=& 2Ric^{jk}C_{ijk}-g^{jk}Ric_{jk,i} \nonumber
\\ &=& Ric ^{jk}g_{jk,i}-(g^{jk}Ric_{jk})_{,i}+ g^{jk}_{,i}Ric_{jk} \nonumber
\\&=& -(g^{jk}Ric_{jk}),i \nonumber
\\&=& -\rho_i
\end{eqnarray}
where $\rho := g^{jk}Ric_{jk}$ and $\rho_i = \frac{\partial \rho}{\partial y^i}$.
\\Since  $y_i:= FF_{y^i}$, differentiating  with respect to $t$ we have
\begin{center}
$y_i'= -2Ric_{im}y^m$.
\end{center}
Now we compute $h_{ij}'$.
\begin{eqnarray}\label{eq8}
h_{ij}'\nonumber 
&=& (g_{ij}-F^{-2}y_iy_j)'\nonumber
\\ &=& g_{ij}'- \left[ -2F^{-3}F'y_iy_j+F^{-2}y_i'y_j+F^{-2}y_iy_j'\right] \nonumber
\\ &=& -2Ric_{ij}+2F^{-2}\frac{F'}{F}y_iy_j-F^{-2}(y_i'y_j+y_iy_j') \nonumber
\\&=& -2Ric_{ij}+2F^{-2}\frac{F'}{F}y_iy_j+2F^{-2}(Ric_{im}y_j+Ric_{jm}y_i)y^m \nonumber
\\ &=&-2Ric_{ij}-2Rl_il_j+2(Ric_{im}l_j+Ric_{jm}l_i)l^m\nonumber
\\&=&-2Ric_{ij}+2R(h_{ij}-g_{ij})+2(Ric_{im}l_j+Ric_{jm}l_i)l^m
\end{eqnarray}
where $l_i = \frac{y_i}{F}$.
\\ Multiplying (\ref{eq0}) by $g^{ij}$ we have
\begin{equation}\label{eq10}
a_i=\frac{1}{n+1}\left\lbrace \left( 1-2I^mb_m\right) I_i -\|I^2\|b_i\right\rbrace.
\end{equation}
\\After differentiating (\ref{eq0}) with respect to $t$ we have
\\\begin{eqnarray}\label{eq21}
C_{ijk}'=(a_ih_{jk}'+a_jh_{ki}'+a_kh_{ij}') + (a_i'h_{jk}+a_j'h_{ki}+a_k'h_{ij}) \hspace{2.0cm} \nonumber \\ -(b_i(\rho_jI_k+I_j\rho_k)  +b_j(\rho_iI_k+I_i\rho_k)+b_k(\rho_iI_j+I_i\rho_j)). 
\end{eqnarray}
\\ Now contracting (\ref{eq21}) with $I^iI^jI^k$ we have 
\\ \begin{equation}\label{eq22}
C_{ijk}'I^iI^jI^k =\left( a_ih'_{jk}+a_jh'_{ki}+a_kh'_{ij}\right) I^iI^jI^k +3a'_iI^i\|I\|^2-6b_i\rho_j\|I\|^2I^iI^j.
\end{equation}
\\ Using (\ref{eq8}) and (\ref{eq10}) in (\ref{eq22}) we obtain
\begin{equation}\label{eq12}
C_{ijk}'I^iI^jI^k = \frac{-6}{n+1} Ric_{ij}I^iI^j\left(1-3b_mI^m\right)\|I^2\| +3a'_iI^i\|I^2\| - 6b_i\rho_j\|I^2\|I^iI^j.
\end{equation}
\\ Therfore the result follows.
\end{proof}

\textbf{Proof of Theorem 1.1} 

In the view of (\ref{eq6}) and (\ref{eq12}) it follows that $R_{,i,j,k}I^iI^jI^k$ is divisible by $\|I^2\|$. Then we can write
\begin{eqnarray}\label{eq13}
R_{,i,j,k} = A_{ij}I_k + B_ig_{jk}.
\end{eqnarray}
By contracting $R_{,i,j,k}$ with $y^j$ or $y^k$ we have $R_{,i}=0$. So $F_t$ is Einstein.
\section{Normal Ricci flow equation on C3-like Finsler metrics}
If $M$ is compact, then so is $SM$, and we can normalize the above equation by requiring that the flow keeps the volume of $SM$ constant. Recalling the Hilbert form $\omega := F_{y^i}dx^i$, that volume is 
\begin{equation*}
Vol_{SM} := \int_{SM} \frac{(-1)^{\frac{(n-1)(n-2)}{2}}}{(n-1)!} \omega \wedge (d\omega)^{(n-1)} := \int_{SM}dV_{SM}.
\end{equation*}
During the evolution, $F$, $\omega$ and consequently the volume form $dV_{SM}$ and the
volume $Vol_{SM}$, all depend on $t$. On the other hand, the domain of integration
$SM$, being the quotient space of $TM_0$ under the equivalence relation $z \equiv y$,
$z = \lambda y$ for some $\lambda > 0$, is totally independent of any Finsler metric and hence does not depend on $t$. We have from insights in \cite{AKZD}
\begin{center}
$\frac{d}{dt}(dV_{SM}) = \left[ g_{ij}-g_{ij}'-n\frac{d}{dt} \log F\right] dV_{SM}$.
\end{center}
A normalized Ricci flow for Finsler metrics is proposed by Bao and it is given by 
\begin{equation}\label{eq23}
\frac{d}{dt} \log F = -R +\frac{1}{Vol(SM)}\int_{SM} RdV , \quad F(t=0)=F_0, 
\end{equation}
where the given manifold $M$ is compact. Now, we let $Vol(SM)= 1$. Then
all of Ricci-constant metrics are exactly the fixed points of the above flow. Let
\begin{equation} \label{eq14}
Ric_{ij} = \frac{1}{2}(F^2R)_{.y^i.y^j}.
\end{equation}
Differentiating (\ref{eq23}) with respect to $y^i$ and $y^j$ we have, 
\begin{equation}\label{eq15}
\frac{d}{dt}(g_{ij}) = -2Ric_{ij}+ \frac{2}{Vol(SM)}\int_{SM}RdVg_{ij},\quad g(t=0)=g_0.
\end{equation}
Starting with any familiar metric on M as the initial data $F_0$, we may deform
it using the proposed normalized Ricci flow, in the hope of arriving at a Ricci
constant metric.
\\\textbf{Proof of Theorem 1.2} 
\\Here we consider Finsler manifolds that satisfies the normal Ricci flow equation. Then ,
\begin{eqnarray}\label{eq16}
\frac{dg_{ij}}{dt} = -2Ric_{ij}+ 2\int_{SM}RdVg_{ij} ,\quad  d(\log F) = \frac{F'}{F}=-R+\int_{SM}RdV.
\end{eqnarray}
In this case,
\begin{equation}
y_i'= \left( -2Ric_{im}+2\int_{SM}RdVg_{im}\right)y^m.
\end{equation}
Therefore, 
\begin{eqnarray}\label{eq17}
I_i' \nonumber
&=& (g^{jk}C_{ijk})'
\\&=& (g^{jk})'C_{ijk} +g^{jk}C_{ijk}' \nonumber
\\ &=& 2\left[ Ric^{jk}- \int_{SM}RdVg^{jk}]C_{ijk}+g^{jk}[Ric_{jk,i}+2\int_{SM}RdVC_{ijk}\right]  \nonumber
\\ &=& 2Ric ^{jk}g_{jk,i}-(g^{jk}Ric_{jk})_{,i}+ g^{jk}_,Ric_{jk} \nonumber
\\&=& -(g^{jk}Ric_{jk}),i \nonumber
\\&=& -\rho_i.
\end{eqnarray}
\\And  
\begin{eqnarray} \label{eq18}
h_{ij}'\nonumber 
&=& (g_{ij}-F^{-2}y_iy_j)'\nonumber
 \\ &=& g_{ij}'- \left[ -2F^{-3}F'y_iy_j+F^{-2}y_i'y_j+F^{-2}y_iy_j'\right] \nonumber
 \\ &=& g_{ij}'+2F^{-2}\frac{F'}{F}y_iy_j -F^{-2}\left( y_i'y_j+y_iy_j'\right)   \nonumber
\\&=&-2Ric_{ij}+2\int_{SM}RdVg_{ij}+2\left( -R-\int_{SM}RdV\right) l_il_j \nonumber \\ &&+2(Ric_{im}l_j+Ric_{jm}l_i)l^m\nonumber
\\&=&-2Ric_{ij}+2\int_{SM}RdVg_{ij}+2(-R-\int_{SM}RdV)(g_{ij}-h_{ij})\nonumber \\ &&+2(Ric_{im}l_j+Ric_{jm}l_i)l^m\nonumber
\\&=&-2Ric_{ij}-2R(h_{ij}-g_{ij})+2\int_{SM}RdVh_{ij} \nonumber \\ &&+2(Ric_{im}l_j+Ric_{jm}l_i)l^m.
\end{eqnarray}
Therefore,
\begin{equation} \label{eq24}
h_{ij}=-2Ric_{ij}-2R(h_{ij}-g_{ij})+2\int_{SM}RdVh_{ij}+2(Ric_{im}l_j+Ric_{jm}l_i)l^m.
\end{equation}
\\ Now from the un-normalised case,
\\
\begin{eqnarray}\label{eq19}
C_{ijk}'I^iI^jI^k=(a_ih_{jk}'+a_jh_{ki}'+a_kh_{ij}')I^iI^jI^k + (a_i'h_{jk}+a_j'h_{ki}+a_k'h_{ij})I^iI^jI^k \nonumber \\-\left\lbrace b_i(\rho_jI_k+I_j\rho_k)+b_j(\rho_iI_k+I_i\rho_k) +b_k(\rho_iI_j+I_i\rho_j)\right\rbrace I^iI^jI^k.
\end{eqnarray}
\\Now using (\ref{eq18}) and (\ref{eq10}) we get
\\\begin{eqnarray}\label{eq20}
C'_{ijk}I^iI^jI^k = \frac{-6}{n+1}Ric_{ij}I^iI^j(1-3b_mI^m)\|I^2\|+3a_i'I_i\|I^2\| \nonumber \\ -6b_i\rho_jI^iI^j\|I^2\|+6a_k\int_{SM}RdVI^k \|I^2\|.
\end{eqnarray}
So by the same arguement of un-normalised case the result follows.


\begin{thebibliography}{99}

\bibitem{AKZD} Akbar-Zadeh, H., \textit{Sur les espaces de Finsler \`{a} courbures sectionnelles constantes}. Acad. Roy. Belg. Bull. Cl. Sci(5). 74 281-322 (1988)
\bibitem{SSZ}  Chern.S.S., Shen.Z., \textit{Riemannian-Finsler geometry}, World Scientific Publisher, Singapore, 2005.
\bibitem{H1} Hamilton, R.S., \textit{Three-manifolds with positive Ricci curvature}. J. Differential Geom. 17, 255-306 (1982)
\bibitem{H2} Hamilton, R.S., \textit{Four-manifolds with positive  curvature operator}. J. Differential Geom. 24, 153-179 (1986)
\bibitem{GP1} Perelman, G., \textit{The entropy formula for the Ricci flow and its geometric applications}. arXiv: math.DG/ 0211159
\bibitem{GP2} Perelman, G.,  \textit{Ricci flow with surgery on three manifolds}. arXiv:math.DG/ 03109
\bibitem{GP3} Perelman, G., \textit{Finite extinction time for the solutions to the Ricci flow on certain three manifolds}. arXiv: math.DG/ 0307245
 \bibitem{PS} Prasad, B.N., Singh, J.N., \textit{On C3-like Finsler Spaces}. Indian J.pure appl. Math. 19(5), 423-428 (1988)
\bibitem{NSAR} Sadeghzadeh, N. and Razavi,A., \textit{Ricci flow equation on C-Reducible Metrics}. Int. J. Geom. Methods Mod. Phys. 8(4) 773-781 (2011)
 \bibitem{TPN} Tayebi, A., Payghan, E.,  Najafi, B, \textit{Ricci Flow Equation on ($\alpha,\beta$)-metrics}. arXiv:1108.0134v1 [math.DG] 31 Jul 2011



\end{thebibliography}
\end{document}